\renewcommand\section{\@startsection {section}{1}{\z@}
{-30pt \@plus -1ex \@minus -.2ex}
{2.3ex \@plus.2ex}
{\normalfont\normalsize\bfseries}}
\renewcommand\subsection{\@startsection{subsection}{2}{\z@}
{-3.25ex\@plus -1ex \@minus -.2ex}
{1.5ex \@plus .2ex}
{\normalfont\normalsize\bfseries}}
\renewcommand{\@seccntformat}[1]{\csname the#1\endcsname. }
\newtheorem{theorem}{Theorem}
\newtheorem{corollary}{Corollary}
\newcommand{\beq}{\begin{equation}}
\newcommand{\eeq}{\end{equation}}
\def\({\left(}
\def\){\right)}
\def\[{\left[}
\def\]{\right]}
\title{On natural  densities of sets of some type integers}
\author{Dmitry I. Khomovsky}
\affil{\small Lomonosov Moscow State University, Moscow, Russia. Email: \href{mailto:khomovskij@physics.msu.ru}{khomovskij@physics.msu.ru}}
\begin{document}

\maketitle

\begin{abstract}
Let $a_0=b_0=0$ and $0<a_1\leq b_1<a_2\leq b_2<\ldots\leq b_{n}$ be integers.
Let $Q\left(x;\bigcup_{j=1}^{n}[a_j,b_j]\right)$ be the number of integers between $1$ and $x$ such that all exponents in their prime factorization are in $\bigcup_{j=1}^{n}[a_j,b_j]$. The following formula holds:
$$\lim_{x\to\infty}{\frac{Q\left(x;\bigcup_{j=1}^{n}[a_j,b_j]\right)}{x}}=\prod\limits_{p}\sum\limits_{i=0}^{n}\left(\frac{1}{p^{a_{i}}}-\frac{1}{p^{b_{i}+1}}\right).$$
In this paper, we prove this result and then generalize it.
\end{abstract}

\section{Introduction}
In  \cite{R}, Rényi and Turán proved the following theorem.
\begin{theorem}
Let $\Delta$ be a function on natural numbers such that $\Delta(1)=0$ and for a positive integer $u =\prod_{i=1}^r {p_i}^{\alpha_i}>1$ the following holds: $\Delta(u)=\sum_{i=1}^r(\alpha_i-1).$
Then, for any integer $k \geq 0$, there exists a density $d_k$ of the set of integers $y$ such that $\Delta(y)=k$, and the following identity holds:
\beq
\sum\limits_{k=0}^{\infty}d_k z^k=\prod\limits_{p}\left(1-\frac{1}{p}\right)\left(1+\frac{1}{p-z}\right),
\eeq
where $|z|<2.$
\end{theorem}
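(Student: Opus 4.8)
The plan is to study the Dirichlet series $F(s,z)=\sum_{n=1}^{\infty}z^{\Delta(n)}n^{-s}$ and then read off the densities $d_k$ from its behaviour at $s=1$. Since $\Delta$ is additive ($\Delta(mn)=\Delta(m)+\Delta(n)$ for coprime $m,n$), the function $n\mapsto z^{\Delta(n)}$ is multiplicative, so for $\Re s>1$ I would expand $F$ as an Euler product. Using $\Delta(p^{\alpha})=\alpha-1$ for $\alpha\ge1$ and $\Delta(1)=0$, the local factor is $\sum_{\alpha\ge0}z^{\Delta(p^{\alpha})}p^{-\alpha s}=1+\sum_{\alpha\ge1}z^{\alpha-1}p^{-\alpha s}=1+(p^{s}-z)^{-1}$, which gives
\beq
F(s,z)=\prod_{p}\left(1+\frac{1}{p^{s}-z}\right).
\eeq

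Next I would isolate the polar part by writing $F(s,z)=\zeta(s)\,G(s,z)$ with $G(s,z)=\prod_{p}(1-p^{-s})\bigl(1+(p^{s}-z)^{-1}\bigr)$. A short computation shows each local factor of $G$ equals $1+(z-1)/\bigl(p^{s}(p^{s}-z)\bigr)$, so $G$ is given by a Dirichlet series $\sum_{N}c(N,z)N^{-s}$ whose coefficients are multiplicative with $c(p,z)=0$ and $c(p^{j},z)=z^{j-2}(z-1)$ for $j\ge2$. In particular $c(N,z)$ vanishes unless $N$ is powerful, and since the number of powerful $N\le x$ is $O(\sqrt{x})$, this series converges absolutely for $\Re s>1/2$, uniformly for $z$ in compact subsets of $\{|z|<2\}$ (the first singularity $p^{s}=z$ occurs at $p=2$, which is what forces the radius $|z|<2$). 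Hence $F(s,z)$ continues to a function that is analytic on $\Re s>1/2$ apart from a simple pole at $s=1$ of residue $G(1,z)$.

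Finally I would convert this into the densities. Fixing $k$ and setting $c_{k}(d)=[z^{k}]c(d,z)$, comparing coefficients in $F(s,z)=\zeta(s)G(s,z)$ yields $\mathbf 1[\Delta(n)=k]=\sum_{d\mid n}c_{k}(d)$, where $c_{k}$ is supported on powerful numbers with $|c_{k}(d)|\ll 2^{\omega(d)}$, so that $\sum_{d}|c_{k}(d)|\,d^{-\sigma}<\infty$ for every $\sigma>1/2$. Summing over $n\le x$ and using $\lfloor x/d\rfloor=x/d+O(1)$ then gives $N_{k}(x):=\#\{n\le x:\Delta(n)=k\}=x\sum_{d}c_{k}(d)/d+O(x^{1/2+\varepsilon})=g_{k}(1)\,x+o(x)$, where $g_{k}(s)=\sum_{d}c_{k}(d)d^{-s}=[z^{k}]G(s,z)$. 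Thus the density exists and $d_{k}=g_{k}(1)=[z^{k}]G(1,z)$; since $G(1,z)=\prod_{p}(1-1/p)\bigl(1+(p-z)^{-1}\bigr)$ is analytic on $|z|<2$, its Taylor coefficients are precisely the $d_{k}$, and summing recovers the asserted identity $\sum_{k\ge0}d_{k}z^{k}=\prod_{p}(1-1/p)\bigl(1+1/(p-z)\bigr)$.

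The main obstacle is the second step: proving that $G(s,z)$ is genuinely an absolutely convergent Dirichlet series beyond the line $\Re s=1$, i.e. that its auxiliary coefficients are supported on powerful numbers with adequately controlled size. This single fact both legitimises the meromorphic continuation with its simple pole and, via the elementary hyperbola estimate, the term-by-term extraction of each $d_{k}$; it is also precisely what pins the domain of validity to $|z|<2$. An alternative to the hyperbola argument would be a Wiener--Ikehara Tauberian theorem applied to the nonnegative coefficients $\mathbf 1[\Delta(n)=k]$, but the convolution route above is more elementary and yields a power-saving error term as a bonus.
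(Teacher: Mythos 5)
First, a point of reference: the paper never proves this statement. Theorem 1 is quoted from R\'enyi and Tur\'an \cite{R}, and even the paper's own proofs of Theorems 3 and 4 invoke ``the reasoning presented in \cite{R}'' as a black box at exactly the density-extraction step. So your proposal cannot be matched against an in-paper argument; what it does is supply, by elementary means, the content the paper outsources. Your skeleton --- the Euler product $F(s,z)=\prod_p\left(1+1/(p^s-z)\right)$, the factorization $F=\zeta G$ with local factors $1+(z-1)/\left(p^s(p^s-z)\right)$, the convolution identity $\mathbf{1}[\Delta(n)=k]=\sum_{d\mid n}c_k(d)$ with $c_k$ supported on powerful numbers, and the hyperbola estimate $N_k(x)=g_k(1)\,x+O(x^{1/2+\varepsilon})$ --- is sound, and it buys both the existence of each density $d_k$ and a power-saving error term, neither of which the paper makes explicit.

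Second, one claim in your middle step is false, though fortunately it is not load-bearing. You assert that $G(s,z)=\sum_N c(N,z)N^{-s}$ converges absolutely for $\Re s>1/2$ uniformly for $z$ in compact subsets of $\{|z|<2\}$, hence that $F(s,z)$ is analytic on $\Re s>1/2$ apart from the simple pole at $s=1$. Support on powerful numbers is not enough, because the coefficients grow: $|c(2^j,z)|=|z|^{j-2}\,|z-1|$, so the $p=2$ part of $\sum_N|c(N,z)|N^{-\sigma}$ diverges as soon as $|z|\geq 2^{\sigma}$. The correct abscissa of absolute convergence for fixed $z$ is $\max(1/2,\log_2|z|)$, and for $\sqrt{2}<|z|<2$ the function $s\mapsto F(s,z)$ genuinely has singularities in $\Re s>1/2$ other than $s=1$, namely where $2^s=z$ --- exactly the singularity your own parenthetical points at, and which contradicts the uniform claim. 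Your proof survives because the operative steps use the correct, $k$-wise statement: for fixed $k$ the bound $|c_k(d)|\leq 2^{\omega(d)}$ holds (it is a coefficient bound for $(z-1)^{\omega(d)}$, independent of any radius in $z$), so $\sum_d|c_k(d)|d^{-\sigma}<\infty$ for every $\sigma>1/2$ and the hyperbola argument stands; and the final identity needs only that $G(1,z)=\prod_p(1-1/p)\left(1+1/(p-z)\right)$ is analytic on $|z|<2$ with Taylor coefficients $g_k(1)$, which follows from absolute convergence of the double series $\sum_d\sum_k|c_k(d)|\,|z|^k/d$ there. So replace the uniform convergence claim by the fixed-$z$ statement $\max(1/2,\log_2|z|)<1$, valid for every $|z|<2$ --- that is all the continuation past $s=1$ you actually need --- and the proof is complete.
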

It should be noted that if we put $z = 0$, then we get the density of square-free numbers.
We will show that other densities of subsets of integers can be obtained by analogy. To do this, we need to modify the function $\Delta$ in a certain way and carry out the calculations that were used in the paper \cite{R}.

We need the following classical result \cite{Ap}.
\begin{theorem}
Let  $f$ be a multiplicative arithmetic function such that the series $\sum_{n=1}^{\infty}f(n)$ is absolutely convergent, then 
\beq\sum_{n=1}^{\infty}f(n)=\prod\limits_{p}\left(1+\sum\limits_{i=1}^{\infty}f(p^i)\right).\eeq
\end{theorem}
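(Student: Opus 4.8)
The plan is to relate the finite truncation of the Euler product to the partial sums of the series $\sum_n f(n)$, using multiplicativity together with unique factorization, and then to let the truncation tend to infinity. Throughout I write $f(1)=1$, which holds because $f$ is multiplicative and, being summable, is not identically zero.

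First I would fix a real $x\geq 2$ and consider the finite product
$$P_x=\prod_{p\leq x}\left(1+\sum_{i=1}^{\infty}f(p^i)\right)=\prod_{p\leq x}\left(\sum_{i=0}^{\infty}f(p^i)\right).$$
Each factor converges absolutely, since $\sum_{i\geq0}|f(p^i)|\leq\sum_{n\geq1}|f(n)|<\infty$. Being a finite product of absolutely convergent series, $P_x$ may be expanded and its terms rearranged freely. A generic term of the expansion is $\prod_{p\leq x}f(p^{i_p})$, indexed by a tuple $(i_p)_{p\leq x}$ of non-negative integers. By multiplicativity this product equals $f\!\left(\prod_{p\leq x}p^{i_p}\right)$, and by the fundamental theorem of arithmetic the map $(i_p)\mapsto\prod_{p\leq x}p^{i_p}$ is a bijection onto the set $S_x$ of $x$-smooth integers, namely those whose every prime factor is at most $x$. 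Hence
$$P_x=\sum_{n\in S_x}f(n).$$

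Next I would estimate the discrepancy between $P_x$ and the full sum. Since every positive integer not in $S_x$ has a prime factor exceeding $x$ and is therefore itself larger than $x$, we obtain
$$\left|\sum_{n=1}^{\infty}f(n)-P_x\right|=\left|\sum_{n\notin S_x}f(n)\right|\leq\sum_{n>x}|f(n)|,$$
and the right-hand side is a tail of the convergent series $\sum_n|f(n)|$, hence tends to $0$ as $x\to\infty$. The same bound $\sum_p\bigl|\sum_{i\geq1}f(p^i)\bigr|\leq\sum_{n\geq2}|f(n)|<\infty$ guarantees that the infinite product converges, and its value equals $\lim_{x\to\infty}P_x$. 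Combining these facts yields the claimed identity.

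The step I expect to be the main obstacle is the rearrangement: justifying that expanding the finite product reproduces each $f(n)$ with $n\in S_x$ exactly once. This is precisely where absolute convergence is indispensable, since it licenses multiplying the finitely many series term by term and reordering the result, while unique factorization supplies the required bijection. Once these are secured, the passage to the limit is a routine tail estimate.
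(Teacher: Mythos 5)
Your proof is correct. Note that the paper does not prove this statement at all: it is quoted as a classical result and deferred to Apostol's book, and your argument (truncating to the finite product over primes $p\leq x$, expanding it via absolute convergence, multiplicativity and unique factorization to obtain $\sum_{n\in S_x}f(n)$ over the $x$-smooth integers, then closing the gap with the tail bound $\sum_{n>x}|f(n)|\to 0$) is precisely the standard proof found in that reference. One minor slip in your preamble: summability does not imply $f$ is not identically zero (the zero function is summable); the fact $f(1)=1$ comes instead from the usual convention that a multiplicative function is by definition not identically zero, a convention the theorem tacitly requires, since otherwise $f\equiv 0$ would make the left side $0$ and the right side $1$.
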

\begin{corollary}
Let $g$ be a function on natural numbers such that $g(1)=0$ and for a positive integer $u =\prod_{i=1}^r {p_i}^{\alpha_i}>1$ the following holds: $g(u)=\sum_{i=1}^r h(\alpha_i)$, where $h\colon \mathbb{Z}_+\to \{0,1\}$ is an arbitrary function.  Let $z$ and $s$ be complex numbers such that the Dirichlet series $\sum_{n=1}^{\infty}{z^{g(n)}}/{n^s}$ is absolutely convergent, then
\beq\sum\limits_{n=1}^{\infty}\frac{z^{g(n)}}{n^s}=\prod_{p}\left(1+\sum_{i=1}^{\infty}\frac{ z^{h(i)}}{(p^s)^i}\right).\eeq
\end{corollary}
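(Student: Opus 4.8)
The plan is to reduce the statement directly to Theorem 2 by exhibiting the summand $f(n) = z^{g(n)}/n^s$ as a multiplicative arithmetic function and then reading off its values on prime powers. First I would record the key structural property of $g$: if $m$ and $n$ are coprime, their prime factorizations involve disjoint sets of primes, so the defining formula $g(u) = \sum_i h(\alpha_i)$ gives $g(mn) = g(m) + g(n)$; that is, $g$ is additive on coprime arguments, with the convention $g(1) = 0$ serving as the base case. Exponentiating, $z^{g(mn)} = z^{g(m)} z^{g(n)}$, so the map $n \mapsto z^{g(n)}$ is multiplicative.

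Next I would combine this with the elementary fact that $n \mapsto n^{-s}$ is completely multiplicative. Since a product of two multiplicative functions is multiplicative, $f(n) = z^{g(n)}/n^s$ is multiplicative, and in particular $f(1) = z^{g(1)}/1^s = z^0 = 1$, which is the normalization required to invoke Theorem 2.

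With multiplicativity established and absolute convergence of $\sum_n f(n)$ assumed by hypothesis, Theorem 2 applies verbatim and yields $\sum_{n=1}^{\infty} f(n) = \prod_p \big(1 + \sum_{i=1}^{\infty} f(p^i)\big)$. It then remains only to evaluate $f$ on prime powers: for any prime $p$ and any $i \geq 1$ the definition gives $g(p^i) = h(i)$ (a single prime carrying the exponent $i$), whence $f(p^i) = z^{h(i)}/(p^s)^i$. Substituting this into the Euler product produces exactly the claimed identity.

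The computation is essentially routine; the one point demanding genuine care is the verification that $g$ is additive across coprime arguments, which is precisely where the hypothesis that $g(u)$ is a sum of $h$-values indexed by the prime-power exponents is used. Everything else is a direct appeal to Theorem 2 together with the complete multiplicativity of $n^{-s}$.
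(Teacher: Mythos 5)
Your proof is correct and is precisely the argument the paper leaves implicit: the corollary is stated as an immediate consequence of Theorem 2, obtained exactly by noting that $n \mapsto z^{g(n)}/n^s$ is multiplicative (via the additivity of $g$ on coprime arguments) and that $g(p^i) = h(i)$ gives the Euler factors. Your verification of the coprime-additivity step and the normalization $f(1)=1$ simply makes explicit what the paper takes for granted.
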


\section{Main theorems}
The following result was obtained and proved by Shevelev, see \cite{VS,S,S1}. 
Note also that it is easy to obtain using the theorem $1$ presented by Tóth in \cite{TT}.
We give one more proof.
\begin{theorem}
Let $a_0=b_0=0$ and $0<a_1\leq b_1<a_2\leq b_2<\ldots\leq b_{n}$ be integers.
Let $Q\left(x;\bigcup_{j=1}^{n}[a_j,b_j]\right)$ be the number of integers between $1$ and $x$ such that all exponents in their prime factorization are in $\bigcup_{j=1}^{n}[a_j,b_j]$. The following formula holds:
\beq\label{T}\lim_{x\to\infty}{\frac{Q\left(x;\bigcup_{j=1}^{n}[a_j,b_j]\right)}{x}}=\prod\limits_{p}\sum\limits_{i=0}^{n}\left(\frac{1}{p^{a_{i}}}-\frac{1}{p^{b_{i}+1}}\right).\eeq
\end{theorem}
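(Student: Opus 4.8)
The plan is to realize the counting function through a multiplicative indicator together with a Dirichlet convolution, and then to estimate the resulting sum by elementary means. Write $S=\bigcup_{i=0}^{n}[a_i,b_i]$ for the set of admissible exponents; it contains $0$ because $a_0=b_0=0$, and the hypotheses $0<a_1\le b_1<a_2\le\cdots$ make the blocks $[a_i,b_i]$ pairwise disjoint. Let $f$ be the multiplicative function with $f(p^k)=1$ if $k\in S$ and $f(p^k)=0$ otherwise; then $f$ is exactly the indicator of the integers being counted, so $Q\big(x;\bigcup_{j=1}^n[a_j,b_j]\big)=\sum_{m\le x}f(m)$. Setting $\phi=f*\mu$ gives $f=\mathbf 1*\phi$ with $\phi$ multiplicative and $\phi(p^k)=f(p^k)-f(p^{k-1})=[k\in S]-[k-1\in S]\in\{-1,0,1\}$; in particular $\phi(p^k)=0$ for every $k>b_n+1$, so $\phi$ is supported on a thin set.

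The decisive observation is that $\phi(p)=[1\in S]-[0\in S]$ equals $0$ when $a_1=1$ and $-1$ when $a_1\ge 2$. I treat first the case $a_1=1$: then $\phi(p)=0$ forces every prime dividing a $d$ with $\phi(d)\ne 0$ to divide it at least to the second power, so $\phi$ is supported on powerful (squarefull) numbers. Since there are only $O(\sqrt x)$ powerful integers up to $x$ and $|\phi|\le 1$, both $\sum_{d\le x}|\phi(d)|$ and $x\sum_{d>x}|\phi(d)|/d$ are $O(\sqrt x)$. Interchanging summation therefore yields
\[
\sum_{m\le x}f(m)=\sum_{d\le x}\phi(d)\Big\lfloor\frac{x}{d}\Big\rfloor=x\sum_{d=1}^{\infty}\frac{\phi(d)}{d}+O(\sqrt x),
\]
so the density exists and equals the absolutely convergent series $\sum_{d\ge 1}\phi(d)/d$. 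Since $\phi$ is multiplicative, Theorem~2 gives $\sum_{d\ge 1}\phi(d)/d=\prod_p\big(1+\sum_{k\ge 1}\phi(p^k)/p^k\big)$. From $f(p^k)=\sum_{j\le k}\phi(p^j)$ one obtains the power-series identity $\sum_{k\ge 0}\phi(p^k)t^k=(1-t)\sum_{k\in S}t^k$; evaluating at $t=1/p$ and telescoping $\sum_{k\in S}(p^{-k}-p^{-k-1})$ over the disjoint blocks $[a_i,b_i]$ produces the local factor $\sum_{i=0}^n(p^{-a_i}-p^{-b_i-1})$, which is precisely \eqref{T}.

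It remains to dispose of the degenerate case $a_1\ge 2$, where $1\notin S$ and the convolution above no longer converges absolutely. Here every counted integer has all exponents $\ge 2$, i.e.\ is powerful, so $Q(x;\cdot)=O(\sqrt x)$ and the limit is $0$; and one checks that the product in \eqref{T} also vanishes, since each factor equals $1-1/p+O(1/p^2)$ while $\prod_p(1-1/p)$ diverges to $0$. The main obstacle, and the reason for the case split, is exactly this convergence issue: everything hinges on the location of the smallest positive admissible exponent, which decides whether $\phi$ lives on the powerful numbers—yielding a genuine positive density with a clean $O(\sqrt x)$ error—or whether the counted set is already too sparse to carry positive density at all.
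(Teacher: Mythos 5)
Your proof is correct, and it takes a genuinely different route from the paper's. The paper forms the two-variable Dirichlet series $\delta(s,z)=\sum_{n\ge 1} z^{g(n)}n^{-s}$, factors it via Corollary 1, and then appeals to the analytic reasoning of R\'enyi and Tur\'an \cite{R} to convert that Euler-product identity into the existence of the densities $d_k$ of the sets $\{m:\ g(m)=k\}$, finally reading off $d_0$ at $z=0$; no error term is produced, and the analytic step is only sketched. You instead run the elementary convolution (Wintner-type) method: with $f$ the multiplicative indicator of the counted set and $\phi=f*\mu$, the identity $f=\mathbf 1*\phi$ plus the key observation that $\phi(p)=0$ when $a_1=1$ puts the support of $\phi$ inside the powerful numbers, so $\sum_{d\le x}|\phi(d)|=O(\sqrt x)$ and $x\sum_{d>x}|\phi(d)|/d=O(\sqrt x)$, giving the density as the absolutely convergent sum $\sum_{d\ge 1}\phi(d)/d$ with an explicit error $O(\sqrt x)$; Theorem 2 and the telescoping of $(1-1/p)\sum_{k\in S}p^{-k}$ then recover exactly the local factors of \eqref{T}. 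What your approach buys is a self-contained, fully elementary proof with a quantitative error term; what it costs is the case split at $a_1\ge 2$, where absolute convergence fails and you must (and correctly do) argue that both sides vanish --- the counted integers are then powerful, and the partial products of $\prod_p\left(1-1/p+O(1/p^2)\right)$ tend to $0$. The paper's method, in exchange, needs no case distinction, produces the whole generating function $\sum_k d_k z^k$ (all densities at once, not just $d_0$), and generalizes immediately to the prime-dependent weights $h(\alpha,p)$ of Theorem 4, where your powerful-number support argument would have to be reworked prime by prime.
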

\begin{proof}
Let $g$ be a function that satisfies all requirements of  Corollary $1$. Let the function $h$ be such that if $\alpha\in\bigcup_{j = 1}^{n}[a_j, b_j]$, then $h(\alpha) = 0$, else $h(\alpha)= 1.$ Consider the Dirichlet series
\beq\label{T1}
\delta(s,z)=\sum\limits_{n=1}^{\infty}\frac{z^{g(n)}}{n^s}.
\eeq
Using Corollary $1$ we get
\beq\label{T2}
\delta(s,z)=\zeta(s)\prod_{p}\left(1-\frac{1}{p}\right)\left(1+\sum_{i\in I}\frac{1}{(p^s)^i}+\sum_{i\notin I}\frac{z}{(p^s)^i}\right),
\eeq
where $I=\bigcup_{j=1}^{n}[a_j,b_j]$.
Using the reasoning presented in \cite{R} and performing the necessary calculations, we get
\beq
\lim\limits_{m\to\infty}\frac{1}{m}\sum\limits_{n=1}^{m}z^{g(n)}=\sum\limits_{k=0}^{\infty}d_k z^k=\prod_{p}\left(1-\frac{1}{p}\right)\left(1+\sum_{i\in I}\frac{1}{p^i}+\sum_{i\notin I}\frac{z}{p^i}\right), \text{ where } |z|\leq 1.
\eeq
Here, $d_k$ is the natural density of the subset of integers $y$ such that $g(y)=k.$ Thus
\beq\label{T3}d_0=\prod_{p}\left(1-\frac{1}{p}\right)\left(1+\sum_{i\in I}\frac{1}{p^i}\right)=\prod\limits_{p}\sum\limits_{i=0}^{n}\left(\frac{1}{p^{a_{i}}}-\frac{1}{p^{b_{i}+1}}\right), \,\,\, a_0=b_0=0.\eeq
It is clear that
\beq\lim_{x\to\infty}{\frac{Q\left(x;\bigcup_{j=1}^{n}[a_j,b_j]\right)}{x}}=d_0.\eeq
\end{proof}
Below we demonstrate examples of using Theorem $3$. Some of them are well-known.
\begin{align} 
\lim_{x\to\infty}{\frac{Q\left(x;[1,k]\right)}{x}}=&\prod_{p}\left(1-\frac{1}{p^{k+1}}\right)=\frac{1}{\zeta(k+1)}, \\ 
\lim_{x\to\infty}{\frac{Q\left(x;[1,1]\cup[k,\infty]\right)}{x}}=&\prod_{p}\left(1-\frac{1}{p^2}+\frac{1}{p^k}\right),\\
\lim_{x\to \infty }{\frac{Q(x;[1,k-1]\cup[k+1,\infty])}{x}}=&\prod\limits_p\left(1-\frac{1}{p^k}+\frac{1}{p^{k+1}}\right),\\
\label{f}
\lim_{x\to \infty }{\frac{Q(x;\bigcup_{j=0}^{\infty}[2j+1,2j+1])}{x}}=&\prod\limits_p\left(1-\frac{1}{p (p+1)}\right),
\\
\lim_{x\to \infty }{\frac{Q(x;\bigcup_{j=0}^{\infty}[\ell j+1,\ell j+1])}{x}}=&\prod\limits_p\left(1-\frac{p^{\ell-1}-1}{p (p^{\ell}-1)}\right),\,\,\, \ell\in\mathbb{Z}_+.
\end{align}
The formula $(\ref{f})$ gives the density of exponentially odd numbers, see the sequence $A065463$ in Sloane’s Online Encyclopedia of Integer Sequences \cite{OEIS}.
Other interesting results can be found in \cite{T0,T,T1,D,N,Dr}.

Let us consider the generalized theorem.
\begin{theorem}
Let $g$ be a function on natural numbers such that $g(1)=0$ and for a positive integer $u =\prod_{i=1}^r {p_i}^{\alpha_i}>1$ the following holds: $g(u)=\sum_{i=1}^r h(\alpha_i,p_i)$, where $h\colon \mathbb{Z}_+\times\mathbb{P}\to \{0,1\}$ is an arbitrary function.
Let $Q\left(x;h\right)$ be the number of integers $n$ between $1$ and $x$ such that $g(n)=0$. The following formula holds:
\beq\label{PP}\lim_{x\to\infty}{\frac{Q(x;h)}{x}}=\prod\limits_{p}\left(1-\frac{p-1}{p}\sum\limits_{i=1}^{\infty}\frac{h(i,p)}{p^i}\right).\eeq
\end{theorem}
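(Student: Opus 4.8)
The plan is to follow the method of Theorem 3, but now tracking the dependence of $h$ on the prime. Since $g$ is additive over prime powers and $h$ takes values in $\{0,1\}$, the condition $g(n)=0$ holds precisely when $h(\alpha,p)=0$ for every prime power $p^{\alpha}$ exactly dividing $n$; in particular $z^{g(n)}$ is multiplicative. First I would form the Dirichlet series
$$\delta(s,z)=\sum_{n=1}^{\infty}\frac{z^{g(n)}}{n^s}$$
and, exactly as in Corollary 1, expand it as the Euler product
$$\delta(s,z)=\prod_{p}\left(1+\sum_{i=1}^{\infty}\frac{z^{h(i,p)}}{p^{is}}\right).$$

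Next I would factor out $\zeta(s)$, writing $\delta(s,z)=\zeta(s)\,G(s,z)$ with
$$G(s,z)=\prod_{p}\left(1-\frac{1}{p^s}\right)\left(1+\sum_{i=1}^{\infty}\frac{z^{h(i,p)}}{p^{is}}\right),$$
and invoke the Rényi--Turán mean-value reasoning used in the proof of Theorem 3 to conclude that the density generating function $\sum_{k}d_k z^k$ equals $G(1,z)$, where $d_k$ is the density of $\{n:g(n)=k\}$. Setting $z=0$ (so that $z^{h(i,p)}$ equals $1$ when $h(i,p)=0$ and $0$ when $h(i,p)=1$) extracts
$$d_0=\prod_{p}\left(1-\frac{1}{p}\right)\left(1+\sum_{i:\,h(i,p)=0}\frac{1}{p^i}\right),$$
and since $Q(x;h)/x\to d_0$ this already gives the density in closed form.

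It then remains to recast the local factor into the stated shape. Using $\sum_{i=1}^{\infty}p^{-i}=1/(p-1)$ I would write
$$1+\sum_{i:\,h(i,p)=0}\frac{1}{p^i}=1+\sum_{i=1}^{\infty}\frac{1-h(i,p)}{p^i}=\frac{p}{p-1}-\sum_{i=1}^{\infty}\frac{h(i,p)}{p^i},$$
so that
$$\left(1-\frac{1}{p}\right)\left(1+\sum_{i:\,h(i,p)=0}\frac{1}{p^i}\right)=1-\frac{p-1}{p}\sum_{i=1}^{\infty}\frac{h(i,p)}{p^i},$$
which is precisely the factor in $(\ref{PP})$.

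The step I expect to be the main obstacle is the mean-value passage, because here $h$ depends on $p$ and the product $G(1,0)$ need not converge absolutely: its local factor $1-\frac{p-1}{p}\sum_i h(i,p)/p^i$ deviates from $1$ by a quantity of size $h(1,p)/p$ (the tail $\sum_{i\ge 2}h(i,p)/p^i$ is $O(1/p^2)$ and harmless), so the convergence of $\sum_p|G_p(1,0)-1|$ is governed by $\sum_p h(1,p)/p$. When this series converges, the classical Wintner/Rényi--Turán argument applies verbatim and gives $d_0=G(1,0)$. When it diverges, the infinite product diverges to $0$, and one must argue separately that the density of $\{g(n)=0\}$ is then $0$; this follows because the indicator of $\{g(n)=0\}$ is a nonnegative multiplicative function bounded by $1$ whose value at a prime is $\mathbf 1_{\{h(1,p)=0\}}$, so that $\sum_p(1-f(p))/p=\sum_p h(1,p)/p=\infty$ forces mean value $0$. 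Hence the formula holds in every case, with a divergent product understood as density $0$.
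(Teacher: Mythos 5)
Your proposal follows the same skeleton as the paper's proof: form $\delta(s,z)=\sum_n z^{g(n)}n^{-s}$, expand it as an Euler product via Corollary 1, factor out $\zeta(s)$, invoke the R\'enyi--Tur\'an mean-value reasoning to identify the density generating function with the Euler product at $s=1$, and specialize $z=0$; your explicit reduction of the local factor $\left(1-\frac{1}{p}\right)\left(1+\sum_{i:\,h(i,p)=0}p^{-i}\right)$ to $1-\frac{p-1}{p}\sum_{i}h(i,p)p^{-i}$ is exactly the computation the paper compresses into the single remark that $\lim_{z\to 0}z^{h(i,p)}=1-h(i,p)$. Where you genuinely part ways with the paper is your final paragraph. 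The paper's proof is essentially two lines ("similarly to the previous theorem" plus that limit remark) and never confronts the fact that, once $h$ may depend on $p$, the local factors are no longer uniformly $1+O(p^{-2})$: the deviation from $1$ is of size $h(1,p)/p$, so the product can diverge to $0$ (your implicit example is the right one: $h(1,p)=1$ for all $p$ makes $\{n: g(n)=0\}$ the set of powerful numbers), and the Wintner-type transfer from Dirichlet series to mean value is justified only when $\sum_p h(1,p)/p$ converges. Your case split --- Wintner/R\'enyi--Tur\'an when that series converges, and, when it diverges, a Wirsing-type mean-value theorem for nonnegative multiplicative functions bounded by $1$ (here the indicator of $\{g(n)=0\}$, whose value at $p$ is $1$ exactly when $h(1,p)=0$, so $\sum_p(1-f(p))/p=\infty$ forces mean value $0$) --- is precisely what is needed for the statement to hold in all cases, with a divergent product read as density $0$. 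So your proof is correct and, on the one point where the paper is silent, strictly more complete; the cost is an appeal to an external mean-value theorem, which the paper avoids only by not addressing the issue at all.
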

\begin{proof}
Similarly to the proof of the previous theorem, we have
\beq\label{P1}
\delta(s,z)=\zeta(s)\prod_{p}\left(1-\frac{1}{p}\right)\left(1+\sum_{i=1}^{\infty}\frac{z^{h(i,p)}}{(p^s)^i}\right).
\eeq
Since $\lim\limits_{z\to0}z^{h(i,p)}=1-h(i,p)$, we get
\beq\label{P2}d_0=\prod\limits_{p}\left(1-\frac{p-1}{p}\sum\limits_{i=1}^{\infty}\frac{h(i,p)}{p^i}\right).\eeq
Finally we conclude that $\lim\limits_{x\to\infty}{\frac{Q(x;h)}{x}}=d_0.$
\end{proof}
This theorem is a powerful tool for obtaining  densities of various subsets of integers. Let us show it.

\noindent{\bf Example $1$.} Suppose that we need to obtain the natural density of numbers $n=\prod_{i=1}^r {p_i}^{\alpha_i}$ such that $p_i>q$ and $\alpha_i<k$ for all $1\leq i\leq r,$ here $q$ is some prime. In other words, these are $k$th-power-free numbers for which $\gcd(n,q\#)=1$. We need to use the following function:
\beq
h(\alpha,p)=\begin{cases} 1 &\mbox{if } p \leq q \\
1 & \mbox{if } p>q \text{ and } \alpha \geq k \\
0 & \mbox{if } p>q \text{ and } \alpha < k.
\end{cases}
\eeq
By using $(\ref{PP})$ we get
\beq\lim_{x\to\infty}{\frac{Q(x;h)}{x}}=\frac{\prod\limits_{p\leq q}\left(1-\frac{1}{p}\right)}{\zeta(k)\prod\limits_{p\leq q}\left(1-\frac{1}{p^k}\right)}.\eeq

\noindent{\bf Example $2$.} 
Let $S =\{q_1,q_2,\ldots,q_m\} $ be the set of arbitrary distinct primes. Consider $k$th-power-free numbers $n$ such that $\gcd(n,q_i) = 1$ for all $1\leq i\leq m$. To find the density we have to use 
\beq
h(\alpha,p)=\begin{cases} 1 &\mbox{if } p\in S \\
1 & \mbox{if } \alpha \geq k \\
0 & \mbox{if } p\notin S \text{ and } \alpha < k.
\end{cases}
\eeq
Then
\beq\lim_{x\to\infty}{\frac{Q(x;h)}{x}}=\frac{1}{\zeta(k)}\prod\limits_{q\in S}\frac{q^k-q^{k-1}}{q^k-1}.\eeq

\noindent{\bf Example $3$.} 
Let us consider the numbers $n=\prod_{i=1}^r {p_i}^{\alpha_i}$ for which one of exponents $\alpha_i$ can be any number, and the rest are less than $k$. We denote the density of such numbers by $\rho$.
First, we need to obtain a density of numbers for which the prime factorization can have any exponent for prime $p$, and the other prime exponents less than $k$.
With the help of $(\ref{PP})$ we conclude that this density is
\beq\rho_p=\frac{1}{\zeta(k)}\frac{p^k}{p^k-1}.\eeq
Finally we have
\beq\rho=\frac{1}{\zeta(k)}+\sum\limits_{p\in \mathbb{P}}\left(\rho_p-\frac{1}{\zeta(k)}\right)=\frac{1}{\zeta(k)}\left(1+\sum\limits_{p\in \mathbb{P}}\frac{1}{p^k-1}\right).\eeq

\smallskip
\noindent{\bf Acknowledgments.}
The author is grateful to the participants of the internet forum {\tt https://dxdy.ru} for their fruitful discussions.

\end{document}